\documentclass[12pt, onesided, reqno]{amsart}

\usepackage{amssymb}
\usepackage{amsmath}
\usepackage{color}
\usepackage{enumerate}
\usepackage{graphicx}

\setlength{\oddsidemargin}{-0.0in}
\setlength{\textwidth}{6.5in}
\setlength{\topmargin}{-0.0in}
\setlength{\textheight}{8.4in}
\evensidemargin\oddsidemargin

\begin{document}

\setcounter{page}{1}

\newtheorem{PROP}{Proposition}
\newtheorem{REMS}{Remark}
\newtheorem{LEM}{Lemma}
\newtheorem{THEA}{Theorem A\!\!}
\renewcommand{\theTHEA}{}
\newtheorem{THEB}{Theorem B\!\!\!}
\renewcommand{\theTHEB}{}

\newtheorem{theorem}{Theorem}
\newtheorem{proposition}[theorem]{Proposition}
\newtheorem{corollary}[theorem]{Corollary}
\newtheorem{lemma}[theorem]{Lemma}
\newtheorem{assumption}[theorem]{Assumption}

\newtheorem{definition}[theorem]{Definition}
\newtheorem{hypothesis}[theorem]{Hypothesis}

\theoremstyle{definition}
\newtheorem{example}[theorem]{Example}
\newtheorem{remark}[theorem]{Remark}
\newtheorem{question}[theorem]{Question}

\newcommand{\eqnsection}{
\renewcommand{\theequation}{\thesection.\arabic{equation}}
    \makeatletter
    \csname  @addtoreset\endcsname{equation}{section}
    \makeatother}
\eqnsection

\def\a{\alpha}
\def\b{\beta}
\def\B{{\bf B}} 
\def\cia{c_{\a, \infty}}
\def\coa{c_{\a, 0}}
\def\cua{c_{\a, u}}
\def\cL{{\mathcal{L}}} 
\def\Ea{E_\a}
\def\eps{\varepsilon}
\def\g{{\gamma}} 
\def\oga{{\overline{\gamma}}}
\def\Ga{{\Gamma}} 
\def\i{{\rm i}}
\def\K{{\bf K}}
\def\Ka{{\bf K}_\a}
\def\kar{{\kappa_{\a, \rho}}}
\def\L{{\bf L}}
\def\lbd{\lambda}
\def\lcr{\left[}
\def\lpa{\left(}
\def\lva{\left|}
\def\otheta{\overline{\theta}}
\def\rpa{\right)}
\def\rcr{\right]}
\def\rva{\right|}
\def\T{{\bf T}}
\def\M{{\mathcal M}}
\def\X{{\bf X}}
\def\Ton{{T_0^{(n)}}}
\def\Lton{{\vert L_{T_0^{(n)}}\vert}}
\def\Un{{\bf 1}}
\def\ZZ{{\bf Z}}
\def\CC{{\bf C}}
\def\GG{{\bf \Ga}}
\def\BB{{\bf B}}
\def\car{c_{\a,\rho}}
\def\sar{s_{\a,\rho}}
\def\pbxy{\Pb_{(x,y)}}
\def\foxy{f^0_{x,y}}

\def\E{\mathbb{E}}
\def\Z{\mathbb{Z}}
\def\N{\mathbb{N}}
\def\Q{\mathbb{Q}}
\def\R{\mathbb{R}}
\def\P{\mathcal{P}}
\def\Pb{\mathbb{P}}
\def\C{\mathbb{C}}
\def\cC{\mathcal{C}}
\def\F{\mathcal{F}}
\def\S{\mathcal{S}}
\def\W{\mathcal{W}}
\def\L{\mathcal{L}}
\def\G{\mathcal{G}}

\newcommand{\equi}{\mathop{\sim}\limits}
\def\={{\;\mathop{=}\limits^{\text{(law)}}\;}}
\def\d{{\;\mathop{=}\limits^{\text{(1.d)}}\;}}
\def\st{{\;\mathop{\geq}\limits^{\text{(st)}}\;}}

\def\cas{\stackrel{a.s.}{\longrightarrow}}
\def\claw{\stackrel{d}{\longrightarrow}}
\def\elaw{\stackrel{d}{=}}
\def\qed{\hfill$\square$}
                  
\title[Windings of the stable Kolmogorov process]
       {Windings of the stable Kolmogorov process}
\author[Christophe Profeta]{Christophe Profeta}

\address{ Laboratoire de Math\'ematiques et Mod\'elisation d'Evry (LaMME),   Universit\'e d'Evry-Val-d'Essonne, UMR CNRS 8071, F-91037 Evry Cedex. {\em Email} : {\tt christophe.profeta@univ-evry.fr}}

\author[Thomas Simon]{Thomas Simon}

\address{Laboratoire Paul Painlev\'e, Universit\'e Lille 1, F-59655 Villeneuve d'Ascq Cedex and Laboratoire de Physique Th\'eorique et Mod\`eles Statistiques, Universit\'e  Paris-Sud, F-91405 Orsay Cedex. {\em Email} : {\tt simon@math.univ-lille1.fr}}

\keywords{Integrated process  - Harmonic measure - Hitting time - Stable L\'evy process - Winding}

\subjclass[2010]{60F99, 60G52, 60J50}

\begin{abstract} We investigate the windings around the origin of the two-dimensional Markov process $(X,L)$ having the stable L\'evy process $L$ and its primitive $X$ as coordinates, in the non-trivial case when $\vert L\vert$ is not a subordinator. First, we show that these windings have an almost sure limit velocity, extending McKean's result \cite{McK} in the Brownian case. Second, we evaluate precisely the upper tails of the distribution of the half-winding times, connecting the results of our recent papers \cite{CP1, PS}.
\end{abstract}

\maketitle
 
\section{Introduction and statement of the results}

A celebrated theorem by F.~Spitzer \cite{Spi} states that the angular part $\{\omega (t), \, t\ge 0\}$ of a two-dimensional Brownian motion starting away from the origin satisfies the following limit theorem
$$\frac{2\,\omega (t)}{\log t}\; \claw\; \cC\qquad \mbox{as $t\to +\infty,$}$$
where $\cC$ denotes the standard Cauchy law. An analogue of this result for isotropic stable L\'evy processes was given in \cite{BW}, with a slower speed in $\sqrt{\log t}$  and a centered Gaussian limit law. Notice that both these results can be obtained as functional limit theorems with respect to the Skorohod topology. We refer to \cite{DV} for a recent paper revisiting these problems, with further results and an updated bibliography. \\

In a different direction, McKean \cite{McK} had observed that the windings of the Kolmogorov diffusion, which is the two-dimensional process $Z$ having a linear Brownian motion as second coordinate and its running integral as first coordinate, obey an almost sure limit theorem. More precisely, if $\{\omega (t), \, t\ge 0\}$ denotes the angular part of the  process $Z$ starting away from the origin, it is shown in Section 4.3 of \cite{McK} that
$$\frac{\omega (t)}{\log t}\; \cas\; -\frac{{\sqrt 3}}{2}\qquad \mbox{as $t\to +\infty$}$$
(the constant which is given in \cite{McK} is actually $-{\sqrt 3}/8,$ but it will be observed below that the evaluation of the relevant improper integral in \cite{McK} was slightly erroneous). Of course, the degeneracy of the Kolmogorov diffusion makes it wind in a very particular way, since this process visits a.s. alternatively and clockwise the left and right half-planes. The regularity of this behaviour, which contrasts sharply with the complexity of planar Brownian motion, makes it possible to use the law of large numbers and to get an almost sure limit theorem.\\

The first aim of this paper is to obtain an analogue of McKean's result in replacing Brownian motion by a strictly $\a-$stable L\'evy process $L = \{L_t, \, t\ge 0\}.$  Without loss of generality, we choose the following normalization for the characteristic exponent
\begin{equation}
\label{Norm}
\Psi(\lbd)\; =\;\log(\E[e^{\i \lbd L_1}])\; =\; -(\i \lbd)^\a e^{-\i\pi\a\rho\, {\rm sgn}(\lbd)}, \qquad \lbd\in\R,
\end{equation}
where $\a\in (0,2]$ is the self-similarity parameter and $\rho = \Pb[L_1 \ge 0]$ is the positivity parameter. We refer to \cite{ST, Z} for accounts on stable laws and processes, and to the introduction of our previous paper \cite{PS} for a discussion on this specific parametrization. Recall that if $\a = 2,$ then necessarily $\rho =1/2$ and $L= \{\sqrt{2} B_t, \, t\ge 0\}$ is a rescaled Brownian motion. Introduce the primitive process
$$X_t \; =\; \int_0^t L_s\, ds,\qquad t\ge 0,$$ 
and denote by $\Pb_{(x,y)}$ the law of the strong Markov process $Z = (X,L)$ started from $(x,y)$. By analogy with the classical Kolmogorov diffusion \cite{Ko}, this process may and will be called the stable Kolmogorov process. When $(x,y)\neq (0,0),$ it can be shown without much difficulty - see Lemma \ref{infinity} below - that under $\pbxy,$ the process $Z$ never hits $(0,0).$ Filling in the gaps made by the jumps of $L$ by vertical lines - see the figure below - and reasoning exactly as in \cite{BW} p.1270 it is possible to define the algebraic angle 
$$\omega(t)\;=\; \widehat{(Z_0, Z_t)}$$
measured in the trigonometric orientation. 

\begin{figure}[h]
\includegraphics[height=8cm]{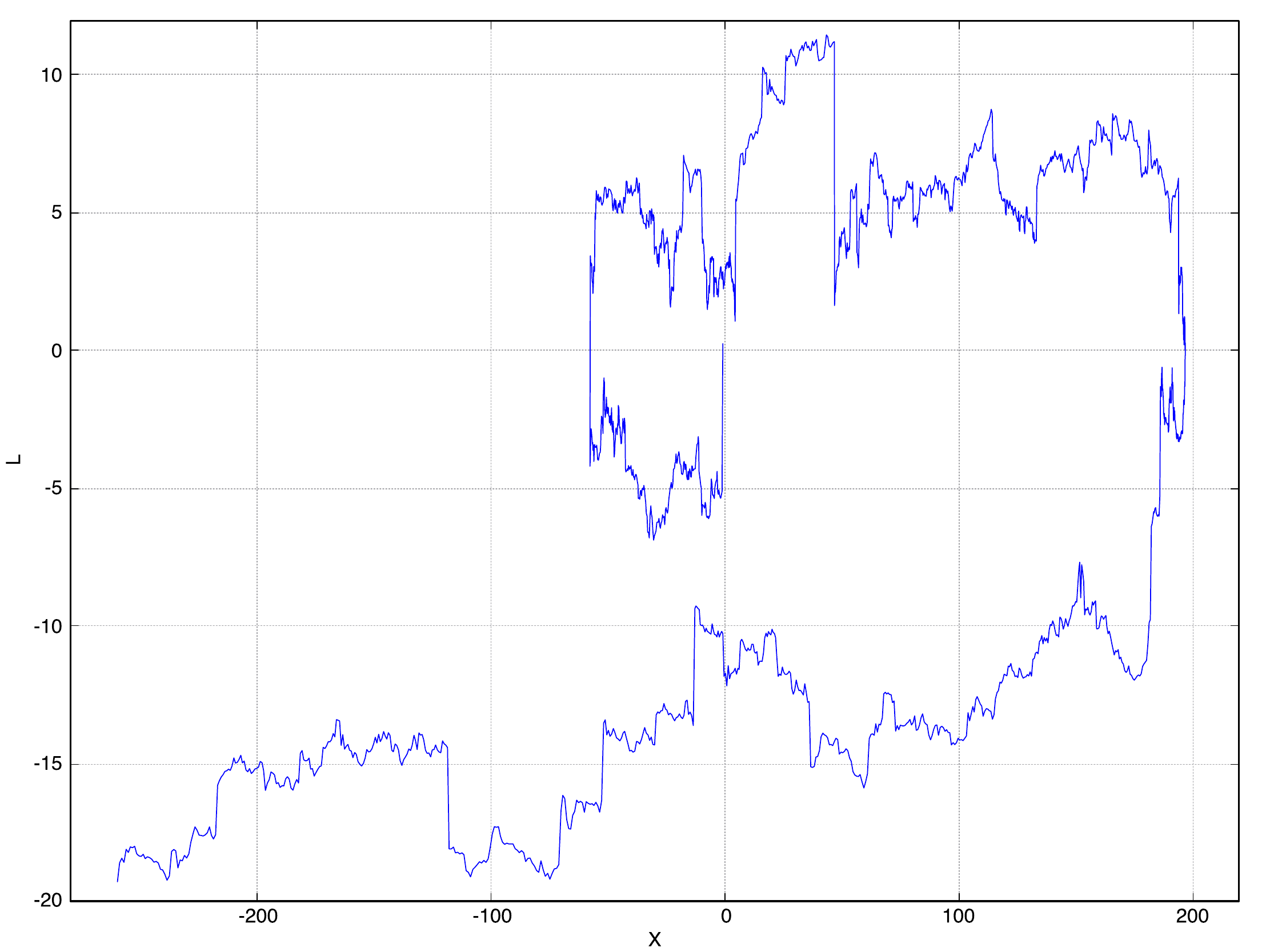}
\caption{One path of $((X_t,L_t), t\leq 100)$ starting from $X_0=-1$ and $L_0=0$.}
\end{figure}

\noindent
If $\rho = 1$ resp. $\rho =0$, then $\vert L\vert$ is a stable subordinator and it is easy to see that $Z$ stays for large times within the positive resp. the negative quadrant with a.s. $X_t/L_t \to +\infty,$ so that $\omega (t)$ converges a.s. to a finite limit which is
$$\widehat{(Z_0, {\rm O}x)}\qquad\mbox{resp.}\qquad\widehat{(-Z_0, {\rm O}x)}.$$
When $\rho\in (0,1)$ and $(x,y)\neq (0,0),$ the L\'evy process $L$ oscillates and the Kolmogorov process $Z$ winds clockwise and infinitely often around the origin as soon as $(x,y)\neq (0,0).$ Indeed, considering the partition $\R^2\,\backslash\{(0,0)\} = \P_-\cup \P_+$ with
$$\P_-\; =\; \{x<0\}\,\cup\,\{x=0, y<0\}\qquad\mbox{and}\qquad \P_+\; =\; \{x>0\}\,\cup\,\{x=0, y>0\},$$
we see that if $(x,y)\in\P_-$ the continuous process $X$ visits alternatively the negative and positive half-lines, starting negative, and that its speed when it hits zero is alternatively positive and negative, starting positive. When $(x,y)\in\P_+$ the same alternating scheme occurs, with opposite signs. In particular, the function $\omega(t)$ is a.s. negative for all $t$ large enough. In order to state our first result, which computes the a.s. limit velocity of $\omega(t),$ let us finally introduce the parameters
$$\displaystyle \gamma\; =\; \frac{\rho\a}{1+\a}\; \in\; (0,1/2) \qquad \text{ and }\qquad  \overline{\gamma}\; =\; \frac{(1-\rho)\a}{1+\a}\; \in\; (0,1/2).$$ 

\begin{THEA}
Assume $\rho\in (0,1)$ and $(x,y)\neq(0,0)$. Then, under $\pbxy,$ one has 
$$\frac{\omega(t)}{\log t}\; \cas\; - \frac{2\sin(\pi\gamma)\sin(\pi \overline{\gamma})}{\a\sin(\pi (\gamma+\overline{\gamma}))}  \qquad \mbox{as $t\rightarrow +\infty.$}$$ 
\end{THEA}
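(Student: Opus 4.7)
The plan is to exploit the renewal structure of $Z$ at the successive zeros $(T_0^{(n)})_{n\ge 1}$ of the coordinate $X$, which are a.s.\ finite when $\rho\in(0,1)$ since $L$ oscillates. On each excursion $(T_0^{(n)},T_0^{(n+1)})$ the process $Z$ remains in one of the two open half-planes $\{X>0\}$ or $\{X<0\}$, whose angular range from the origin is exactly $\pi$; moreover the discussion preceding the statement of Theorem A shows that the sign of $Y_n:=L_{T_0^{(n)}}$ alternates deterministically and that $Z$ winds clockwise. Consequently
$$\omega\lpa T_0^{(n)}\rpa-\omega\lpa T_0^{(1)}\rpa\;=\;-(n-1)\pi\qquad\text{and}\qquad\sup_{t\in[T_0^{(n)},T_0^{(n+1)}]}\lva\omega(t)-\omega\lpa T_0^{(n)}\rpa\rva\;\le\;\pi,$$
so Theorem A reduces to showing that
$$\frac{\log T_0^{(n)}}{n}\;\cas\;\frac{\pi\a\sin(\pi(\g+\oga))}{2\sin(\pi\g)\sin(\pi\oga)}\qquad\text{as $n\to+\infty$.}$$

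I would establish this by a multiplicative renewal argument. Applying the strong Markov property at $T_0^{(n)}$ together with the self-similarity of $(X,L)$---which implies, under $\Pb_{(0,y)}$ with $y\ne 0$, the scaling $T_0\elaw\lva y\rva^\a T_0^\star$ and $L_{T_0}\elaw\lva y\rva L_{T_0}^\star$, the starred quantities being computed under $\Pb_{(0,\mathrm{sgn}(y))}$---one obtains the factorisations
$$T_0^{(n+1)}-T_0^{(n)}\;\elaw\;\lva Y_n\rva^\a\tau_n\qquad\text{and}\qquad Y_{n+1}\;\elaw\;\lva Y_n\rva\,\zeta_n,$$
where, conditionally on $\epsilon_n:=\mathrm{sgn}(Y_n)$, the pair $(\tau_n,\zeta_n)$ is an independent copy of $(T_0,L_{T_0})$ under $\Pb_{(0,\epsilon_n)}$. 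Since $(\epsilon_n)$ alternates deterministically, the sequence $(\log\lva Y_n\rva)_{n\ge 1}$ is, up to parity, a random walk with step-mean
$$\mu\;:=\;\frac{1}{2}\lpa\E_{(0,1)}[\log\lva L_{T_0}\rva]+\E_{(0,-1)}[\log\lva L_{T_0}\rva]\rpa.$$
Assuming $\mu\in(0,+\infty)$, the strong law of large numbers yields $n^{-1}\log\lva Y_n\rva\cas\mu$, and a standard sandwich argument---using that $T_0^{(n)}=\sum_{k<n}\lva Y_k\rva^\a\tau_k$ is dominated at the logarithmic scale by its last term because $\lva Y_k\rva$ grows geometrically fast---then gives $n^{-1}\log T_0^{(n)}\cas\a\mu$.

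The main obstacle is the explicit computation of $\mu$. For this, I would combine the above definition with the distribution of $L_{T_0}$ under $\Pb_{(0,\pm 1)}$ obtained in \cite{CP1, PS}; extracting its logarithmic moments via the derivative at $s=0$ of a suitable Mellin transform, and simplifying the resulting ratios of $\Ga$-factors by means of the reflection formula for the sine, should produce the identity
$$\mu\;=\;\frac{\pi\sin(\pi(\g+\oga))}{2\sin(\pi\g)\sin(\pi\oga)},$$
a quantity which is positive because $\g,\oga\in(0,1/2)$, thereby justifying the assumption $\mu\in(0,+\infty)$ a posteriori and closing the argument. The two auxiliary points to be settled are the finiteness of these logarithmic moments (which should follow from sharp bounds on the density of $L_{T_0}$ near $0$ and $+\infty$ already available in \cite{PS}) and the exact trigonometric simplification itself, in which the bulk of the computational work is expected to lie---this is precisely where the connection with \cite{CP1, PS} advertised in the abstract materialises.
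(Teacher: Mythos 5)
Your proposal follows essentially the same route as the paper: reduction of the winding to the half-winding times $\Ton$ (each excursion in a half-plane contributing exactly $-\pi$), the multiplicative renewal structure $T_0^{(n+1)}-\Ton\elaw |Y_n|^\a\tau_n$, $|Y_{n+1}|\elaw |Y_n|\,\zeta_n$ obtained from the strong Markov property and self-similarity, and the identification of the limit from the logarithmic moments of the harmonic measure via the Mellin transforms of \cite{PS}. (Minor point: for $\E[\ell^{s-1}]$ the derivative is taken at $s=1$, not $s=0$; it gives $\E[\log\ell^-]=\pi\cot(\pi\g)$ and $\E[\log\ell^+]=\pi\cot(\pi\oga)$, whence your $\mu$.) Where you misjudge the difficulty is in the last paragraph: the trigonometric simplification is a one-line computation, whereas the genuinely delicate step is your ``sandwich argument.'' Its lower half needs $n^{-1}\log\tau_n\to 0$ a.s., and since $\tau^\pm$ can be small this requires a quantitative small-deviation bound --- the paper uses $\Pb[\tau^+\le s]\le \Pb_{(0,1)}[\inf\{L_t,\,t\le s\}<0]$ together with Theorem 12.6.1 of \cite{ST}; the upper half needs the polynomial upper tails of $\tau^\pm$ and of $|L_{T_0}|$ under $\pbxy$ (the paper's Lemma 1, which is nontrivial for a general starting point with $xy\neq 0$), combined with a union bound over the $n$ summands. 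The paper implements both directions with Cram\'er's theorem and Borel--Cantelli rather than the plain SLLN, but your scheme does close once these tail and small-deviation estimates are supplied; they, not the sine identity, are where the work lies.
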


\noindent
Note that in the Brownian motion case $\alpha=2$, we have $\rho=1/2$ and $\gamma=\overline{\gamma}=1/3,$ so that 
 $$\displaystyle \frac{2\sin(\pi\gamma)\sin(\pi \overline{\gamma})}{\a\sin(\pi (\gamma+\overline{\gamma}))}\;=\;\frac{\sqrt{3}}{2}\cdot$$  
 The constant $-\sqrt{3}/8$ which is given in Section 4.3 of \cite{McK} is not the right one because of the erroneous evaluation of the integral in (3.8.a) therein: this integral equals actually $\pi/\sqrt{3},$ as can be checked by an appropriate contour integration. The proof of Theorem A goes basically along the same lines as in \cite{McK}. We consider the successive hitting times of $0$ for the integrated process $X:$ 
$$T_0^{(1)}\; =\; T_0\;=\;\inf\{t>0,\; X_t=0\} \qquad \mbox{and}\qquad \Ton\;=\;\inf\{t>T_0^{(n-1)},\; X_t=0\},$$
which can be viewed as the half-winding times of $Z.$ We first check that $n\mapsto\Ton$ increases a.s. to  $+\infty$ as soon as $(x,y)\neq (0,0).$ The exact exponential rate of escape of $\Ton,$ which yields the exact winding velocity, is computed thanks to an elementary large deviation argument involving the law of $L_{T_0}$ under $\Pb_{(x,0)},$ a certain transform of the half-Cauchy distribution as observed in \cite{PS}. Notice that contrary to \cite{McK} where the proof is only sketched, we provide here an argument with complete details.\\

In the Brownian case $\a = 2,$ an expression of the law of the bivariate random variable $(\Ton, \vert L_\Ton\vert)$ under $\Pb_{(0,y)}$ has been given in Theorem 1 of \cite{La2}, in terms of the modified Bessel function of the first kind. This expression becomes very complicated under $\pbxy$ when $x\neq 0,$ even for $n = 1$ - see Formula (2) p.4 in \cite{La2}. In all cases, this expression is not informative enough to evaluate the upper tails of $\Ton.$ In \cite{CP1} it was shown that 
$$\pbxy[ \Ton \ge t] \; \asymp\;t^{-1/4}\; (\log t)^{n-1}\qquad \mbox{as $t\rightarrow  +\infty$}$$
where, here and throughout, the notation $f(t)\asymp g(t)$ means that there exist two constants $0<\kappa_1 \leq \kappa_2 <+\infty$ such that $\kappa_1 f(t) \leq g(t) \leq \kappa_2 f(t)$ as $t\rightarrow +\infty$. On the other hand, Theorem A in our previous paper \cite{PS} shows the non-trivial asymptotics 
$$ \Pb_{(x,y)}[T_0>t]\;\asymp\;  t^{-\theta} \qquad \mbox{as $t\rightarrow  +\infty$}$$
for $(x,y)\in\P_-,$ with $\theta= \rho/(1+\alpha(1-\rho)).$ By symmetry, the latter result also shows that $ \Pb_{(x,y)}[T_0>t]\asymp t^{-\otheta}$ as $t\rightarrow  +\infty$ for $(x,y)\in\P_+,$ with $\otheta= (1-\rho)/(1+\alpha\rho).$ The second main result of this paper connects the two above estimates.

\begin{THEB}
Assume that $\rho\in (0,1)$ and $(x,y)\in\P_-$. For every $n\geq 2,$ the following asymptotics hold as $t\to +\infty:$
$$\left\{\begin{array}{ll}
\Pb_{(x,y)}[\Ton>t]\,\asymp \,t^{-\theta}\; (\log t)^{\left[\frac{n-1}{2}\right]} & \mbox{if $\rho < 1/2,$}\\
\vspace{-.2cm}\\
\Pb_{(x,y)}[\Ton>t]\,\asymp \,t^{-\otheta}\; (\log t)^{\left[\frac{n}{2}\right]-1} & \mbox{if $\rho > 1/2,$}\\
\vspace{-.2cm}\\
\Pb_{(x,y)}[\Ton>t]\,\asymp \,t^{-\theta}\; (\log t)^{n-1} & \mbox{if $\rho =1/2.$}
\end{array}\right.$$
\end{THEB}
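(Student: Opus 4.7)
The plan is to prove Theorem B together with a companion two-sided estimate on the tail of $|L_{\Ton}|$ by a joint induction on $n$, combining the strong Markov property of $Z$ at the half-winding times with the $\a$-self-similarity of $L$. The base case $n=1$ is Theorem A of \cite{PS}, which yields $\pbxy[T_0>t]\asymp t^{-\theta}$ for $(x,y)\in\P_-$ and, by symmetry, $\pbxy[T_0>t]\asymp t^{-\otheta}$ for $(x,y)\in\P_+$.

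Two observations prepare the induction. First, by self-similarity one has $T_0\elaw z^\a T_0^{(0,1)}$ under $\Pb_{(0,z)}$ for $z>0$, which combined with the base case yields
$$\Pb_{(0,z)}[T_0>t]\;\asymp\;\lpa z^\a/t\rpa^{\otheta}\!\wedge 1\qquad (z>0),$$
and the symmetric statement with exponent $\theta$ for $z<0$. Second, starting from $(x,y)\in\P_-$ the signs of $L_{T_0^{(k)}}$ alternate deterministically: $L_{T_0^{(2j-1)}}>0$ and $L_{T_0^{(2j)}}<0$ almost surely, so that the conditional survival exponent of the half-winding $T_0^{(k)}-T_0^{(k-1)}$ equals $\otheta$ for $k$ even and $\theta$ for $k$ odd.

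The inductive step is the strong Markov decomposition at $T_0^{(n-1)}$,
$$\pbxy[\Ton>t]\;\leq\; \pbxy[T_0^{(n-1)}>t/2]\,+\,\E_{(x,y)}\!\lcr\Pb_{(0,L_{T_0^{(n-1)}})}[T_0>t/2]\rcr\!,$$
with a matching lower bound obtained analogously. Feeding the inductive tail estimate for $|L_{T_0^{(n-1)}}|$ into the expectation via the above scaling formula reduces the second term to
$$\int_0^\infty \pbxy\!\lcr|L_{T_0^{(n-1)}}|\in dz\rcr\,\lcr (z^\a/t)^{B}\!\wedge 1\rcr,$$
with $B\in\{\theta,\otheta\}$ the new half-winding exponent. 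If the inductive tail of $|L_{T_0^{(n-1)}}|$ has the form $z^{-\a A}(\log z)^p$ for large $z$, then splitting this integral at $z=t^{1/\a}$ shows that it is of order $t^{-A}(\log t)^p$ if $A<B$, $t^{-A}(\log t)^{p+1}$ if $A=B$, and $t^{-B}$ (no log) if $A>B$. Starting from $(A_1,p_1)=(\theta,0)$ and iterating this rule along the alternating sequence $B_2=\otheta, B_3=\theta, B_4=\otheta,\ldots$ reproduces the three sub-cases: when $\rho=1/2$ one has $B_k=\theta$ for every $k$, so each step increments $p$, giving $n-1$; when $\rho<1/2$ the state remains $(\theta,0)$ until $k=3$ and $p$ is subsequently raised at every odd step, giving $[(n-1)/2]$; when $\rho>1/2$ the state jumps to $(\otheta,0)$ at $k=2$ (absorbing the previous log) and $p$ is subsequently raised at every even step $k\geq 4$, giving $[n/2]-1$.

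The main obstacle is to propagate through the induction the companion two-sided estimate on $|L_{\Ton}|$ itself, since only this allows the next integration against the survival kernel. The scaling heuristic $\Pb[|L_{\Ton}|>z]\asymp \Pb[\Ton>z^\a]$ predicts the expected order; the upper bound can be obtained by applying the strong Markov property at $T_0^{(n-1)}$ to the crude domination
$$|L_{\Ton}|\;\leq\; |L_{T_0^{(n-1)}}|\,+\,\sup_{T_0^{(n-1)}\leq s\leq \Ton}|L_s-L_{T_0^{(n-1)}}|,$$
together with self-similarity applied to the supremum of the stable increment, while the matching lower bound is produced by restricting to events on which the altitudes $|L_{T_0^{(k)}}|$ and durations $T_0^{(k)}-T_0^{(k-1)}$ are of scales $t^{1/\a}$ and $t$ respectively, and invoking the lower-bound half of the inductive hypothesis at each step. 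The boundary region where $t-T_0^{(n-1)}$ becomes small, which was swept under the rug above, is handled by the usual splitting $\{T_0^{(n-1)}\leq t/2\}$ versus $\{T_0^{(n-1)}>t/2\}$.
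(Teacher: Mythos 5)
Your overall architecture --- reduce each half-winding to the kernel $\Pb_{(0,z)}[T_0>t]\asymp (z^\a/t)^{B}\wedge 1$ by scaling, integrate it against the tail of the previous altitude $|L_{T_0^{(n-1)}}|$, and iterate the resulting $(A,p)$-update rule along the alternating sequence of exponents --- is sound, and your bookkeeping does reproduce all three sub-cases. Your integration rule is in substance the paper's Lemma \ref{lem:ln} on the tail of a product of independent positive variables. The paper organizes the same ingredients without induction: it writes $\Ton=S_1+\cdots+S_n$ and uses the strong Markov property and self-similarity to represent each increment \emph{exactly} as a product of independent factors, e.g. $S_{2p}\elaw |L_{T_0}|^\a\times\tau^+\times\bigl(\prod_{k=1}^{p-1}\ell_k^-\,\ell_k^+\bigr)^\a$, after which each tail follows from the product lemma, the lower bound from $S_n$ alone, and the upper bound from a union bound over the $S_k$.

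Two steps of your proposal are genuinely incomplete. First, the base case of your companion estimate, namely $\pbxy[\,|L_{T_0}|>z]\asymp z^{-\a\theta}$ for a general starting point $(x,y)\in\P_-$ with $xy\neq 0$, cannot be obtained by scaling (scaling only covers $x=0$) and is not a ``heuristic'': it is the hardest analytic input of the whole theorem, and the paper devotes Lemma \ref{lem:asympLT0} to it, via a double Mellin transform of the harmonic measure and an analytic continuation locating the two poles. Without it your recursion cannot start. Second, propagating the upper bound on $|L_{\Ton}|$ through the domination by $\sup_{T_0^{(n-1)}\le s\le\Ton}|L_s-L_{T_0^{(n-1)}}|$ requires the running supremum of $L$ over a half-winding started from $(0,\pm1)$ to have the same tail exponent $\a\otheta$ (resp.\ $\a\theta$) as the terminal value; this is not known a priori, and the obvious bound $\Pb[\sup_{s\le T_0}|L_s|>z]\le\Pb[T_0>t_0]+\Pb[\sup_{s\le t_0}|L_s|>z]$ optimizes to the strictly weaker exponent $\a\otheta/(1+\otheta)$, which would wreck the recursion. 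The detour is unnecessary: the strong Markov property at $T_0^{(n-1)}$ and self-similarity give the exact identity in law $|L_{\Ton}|\elaw |L_{T_0^{(n-1)}}|\cdot\ell$ with $\ell$ independent and distributed as $|L_{T_0}|$ under $\Pb_{(0,\pm1)}$, so the companion estimate follows from the same product computation as the time tails; replace the supremum bound by this identity.
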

By symmetry, the same result holds for $(x,y)\in\P_+$ with $\theta$ and $\otheta$ switched. In the above statement, the separation of cases is intuitively clear, since for $\rho<1/2$ resp. $\rho > 1/2$ the negative resp. positive excursions below 0 will tend to prevail. The main difficulty in the proof of Theorem B is to show that the required asymptotic behaviour does not depend on the starting point $(x,y)\in\P_-.$ This is handled thanks to a uniform estimate on the Mellin transform of the harmonic measure $\pbxy[L_{T_0}\in .],$ and a general estimate on the upper tails of the product of two positive independent random variables. These two estimates have both independent interest.

\section{Proofs}

\subsection{Preliminary results}

As mentioned before, we first establish some estimates on the harmonic measure of the left half-plane with respect to the stable Kolmogorov process. When starting from $(x,y)\in\P_-,$ the process $(X,L)$ ends up in exiting $\P_-$ on the positive vertical axis, and its exit distribution is given by the law of $L_{T_0}$ under $\Pb_{(x,y)}$. This distribution is called the harmonic measure since by the generalized Poisson formula, it allows to construct harmonic functions with respect to the degenerate operator
$$\cL^{\a, \rho}_y\; +\; y \, \frac{\partial}{\partial x}$$
on the half-plane, where $\cL^{\a, \rho}_y$ is the generator of the stable L\'evy process $L.$ However, we shall not pursue these lines of research here.

\begin{lemma}\label{lem:asympLT0}
Assume that $(x,y)\in\P_-.$ The Mellin transform $s\mapsto \E_{(x,y)} [L_{T_0}^{s-1}]$ is real-analytic on $(1/(\g-1), 1/(1-\g)),$ with two simple poles at $1/(\g -1)$ and $1/(1-\g).$ In particular, the random variable $L_{T_0}$ has a smooth density $f^0_{x,y}$ under $\pbxy,$ and there exist $c_1, c_2 >0$ such that 
$$f^0_{x,y}(z)\, \equi_{z\rightarrow0} \, c_1\, z^{\a\theta/\gamma}\qquad \text{and}\qquad f^0_{x,y}(z)\, \equi_{z\rightarrow+\infty}\,  c_2\,  z^{-\alpha \theta-1}.$$
\end{lemma}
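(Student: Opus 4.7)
The strategy is to read off the density asymptotics from the pole structure of the Mellin transform
$$M_{x,y}(s)\;:=\;\E_{(x,y)}\bigl[L_{T_0}^{s-1}\bigr]$$
via Mellin inversion. A direct computation from the definitions $\g=\rho\a/(1+\a)$ and $\theta=\rho/(1+\a(1-\rho))$ yields the identities $\a\theta+1 = \a\theta/\g = 1/(1-\g),$ so the two exponents announced in the statement correspond, in the Mellin picture, to simple poles at the endpoints $\pm 1/(1-\g)$ of the strip $(1/(\g-1),1/(1-\g)).$

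I would first handle the boundary case $y=0,$ $x<0.$ The self-similarity of $(X,L),$
$$(X_{ct},L_{ct})\;\elaw\;(c^{1+1/\a}X_t,\,c^{1/\a}L_t),$$
reduces this to $x=-1$ through the scaling identity
$$M_{x,0}(s)\;=\;|x|^{(s-1)/(1+\a)}\,M_{-1,0}(s),$$
and for the initial point $(-1,0)$ the authors' previous paper \cite{PS} provides the explicit distribution of $L_{T_0}$ as a certain transform of the half-Cauchy law. From this representation I would express $M_{-1,0}(s)$ as a ratio of $\Ga$-factors, which makes it visibly real-analytic on $(1/(\g-1),1/(1-\g))$ with simple poles at the endpoints, and also delivers the polynomial decay on vertical lines that is needed later for the contour shifts.

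To pass to a general $(x,y)\in\P_-,$ I would apply the strong Markov property at the stopping time $\tau=\inf\{t>0,\,L_t=0\},$ which is a.s.\ finite when $\rho\in(0,1).$ On the event $\{\tau<T_0\}$ the process restarts at $(X_\tau,0)$ with $X_\tau<0,$ and the conditional law of $L_{T_0}$ is the $y=0$ marginal. Combining with the scaling formula above yields a representation
$$M_{x,y}(s)\;=\;\Phi_{x,y}(s)\,M_{-1,0}(s)\;+\;R_{x,y}(s),$$
where $\Phi_{x,y}(s)=\E_{(x,y)}\!\left[|X_\tau|^{(s-1)/(1+\a)}\Un_{\{\tau<T_0\}}\right]$ and the residual $R_{x,y}$ accounts for $\{T_0\leq \tau\}.$ The function $\Phi_{x,y}$ is holomorphic on a strip containing a neighbourhood of $(1/(\g-1),1/(1-\g)),$ so the pole locations, simplicity, and residues of $M_{x,y}$ are controlled by those of $M_{-1,0}$ up to a non-vanishing analytic factor; smoothness of $\foxy$ follows from the rapid decay of $M_{x,y}$ on vertical lines.

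Finally, for each fixed $(x,y)\in\P_-$ I would apply the Mellin inversion formula
$$\foxy(z)\;=\;\frac{1}{2\pi\i}\int_{c-\i\infty}^{c+\i\infty}z^{-s}\,M_{x,y}(s)\,ds,$$
valid for $c\in(1/(\g-1),1/(1-\g)),$ and shift the contour past the simple pole at $1/(1-\g)$ (for the asymptotics at $+\infty$) or at $1/(\g-1)$ (for the asymptotics at $0$); picking up the residues yields the claimed equivalents with explicit $c_1,c_2.$ The main obstacle is the rigorous justification of these contour shifts: one needs uniform polynomial decay of $|M_{x,y}(s)|$ as $|\mathrm{Im}(s)|\to+\infty$ on vertical lines lying in a slightly enlarged strip, together with the absence of other singularities therein. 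Both properties must be propagated from the explicit boundary case $y=0$ through the Markov decomposition above, and this propagation is where the main technical effort of the argument lies.
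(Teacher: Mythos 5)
Your overall framework --- read off the density asymptotics from the two simple poles of the Mellin transform via Mellin inversion / the converse mapping theorem --- is exactly the one the paper uses (it invokes Theorem 4 of \cite{FGD} for that step), and your identification of the exponents with the endpoints $\pm1/(1-\g)$ is correct. The gap lies in your reduction of a general starting point $(x,y)\in\P_-$ with $xy\neq0$ to the boundary case $y=0$. You stop the process at $\tau=\inf\{t>0:\ L_t=0\}$ and assert that $\tau<\infty$ a.s.\ whenever $\rho\in(0,1)$; this is false for $\a\le1$, where single points are polar for the strictly stable process $L$, so that $\tau=+\infty$ a.s.\ and your decomposition degenerates to $M_{x,y}=R_{x,y}$, i.e.\ says nothing. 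Even for $\a>1$, where $\tau$ is indeed a.s.\ finite, the decomposition does not isolate the singularities: the remainder $R_{x,y}(s)=\E_{(x,y)}\bigl[L_{T_0}^{s-1}\Un_{\{T_0\le\tau\}}\bigr]$ is a Mellin transform of exactly the same nature as the one you are trying to control, and there is no reason for it to be analytic beyond (or even up to) $1/(1-\g)$ --- the heavy upper tail of $L_{T_0}$ is produced precisely by paths on which $L$ jumps to a large positive value and stays positive until $X$ crosses zero, and these paths live on $\{T_0\le\tau\}$. Likewise, analyticity of $\Phi_{x,y}(s)=\E_{(x,y)}\bigl[|X_\tau|^{(s-1)/(1+\a)}\Un_{\{\tau<T_0\}}\bigr]$ on a neighbourhood of the closed strip requires positive and negative moments of $|X_\tau|$ of specific orders, which is an unproved and nontrivial input. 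So the central analytic content of the lemma --- continuation of the Mellin transform to the closed strip for $xy\neq0$ --- is not established.

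For comparison, the paper proceeds quite differently: starting from the representation of $\E_{(x,y)}[L_{T_0}^{s-1}]$ in terms of $\int_0^\infty\E_{(x,y)}[X_t^{-\nu}\Un_{\{X_t>0\}}]\,dt$ taken from \cite{PS}, it applies a further Mellin transform in the space variable $x$, evaluates the resulting double transform in closed form, and then inverts in $x$ to obtain an explicit integral representation $H_s(x)$ with an explicit kernel $K_s$; the continuation over $[1/(\g-1),1/(1-\g)]$ is then obtained from direct bounds on $K_s$ and $K_s'$ combined with an integration by parts. If you wish to salvage your route, you need a substitute for the stopping-time reduction that is valid for all $\a\in(0,2]$ and that genuinely controls the behaviour at both endpoints of the strip.
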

\begin{proof}
Observe first that the smoothness and the asymptotic behaviour of the density function of $L_{T_0}$ are a direct consequence of the statement on the Mellin transform, thanks to the converse mapping theorem stated e.g. as Theorem 4 in \cite{FGD}. This latter statement is also a direct consequence of Theorem B in \cite{PS} when either $x=0$ or $y=0$. From now on we shall therefore assume that $xy \neq 0$. By Proposition 2 (i) and Equation (3.2) in \cite{PS} we have
\begin{equation}\label{eq:MelLap}
\E_{(x,y)}\left[L_{T_0}^{s-1}\right]\; = \; \frac{\pi {\displaystyle \int_0^{+\infty} \E_{(x,y)}\left[X_t^{-\nu}\Un_{\{X_t>0\}}\right]dt } }{(1+\alpha)^{1-\nu}\left(\Gamma\left(1-\nu\right)\right)^2\Gamma(1-s)\sin(\pi s (1-\gamma))}
\end{equation}
with $s=(1-\nu)(1+\alpha)\in(0,1)$. However, it does not seem easy to study the poles of the right-hand side directly since the integral is not expressed in closed form, and for this reason we shall perform a further Mellin transformation in space. First, we know from Proposition 1 in \cite{PS} that
$$\E_{(x,y)}[X_t^{-\nu}\Un_{\{X_t>0\}}] = \frac{\Gamma(1-\nu)}{\pi}
 \int_0^{\infty} \! \lbd^{\nu-1} e^{-\car\lbd^\alpha t^{\alpha +1}} \sin(\lambda(x+yt) + \sar\lambda^\alpha t^{\alpha+1} + \pi\nu/2)\, d\lambda$$
for every $\nu\in (0,1),$  with $$\sar\, = \,\frac{\sin (\pi\a(\rho -1/2))}{\a +1}\, \in \, (-1,1)\qquad\mbox{and}\qquad \car\, =\, \frac{\cos(\pi\a (\rho -1/2))}{\a +1}\, \in\, (0,1).$$
For every $\beta\in (0,\nu)$ this yields
\begin{align*}
&\int_{-\infty}^0 |x|^{\beta-1} \E_{(x,y)}[X_t^{-\nu}\Un_{\{X_t>0\}}] dx\\
&\qquad=\;\frac{\Gamma(1-\nu)}{\pi}\int_0^{+\infty} \lambda^{\nu-1} e^{-\car\lbd^\alpha t^{\alpha +1}} \int_{-\infty}^0  |x|^{\beta-1}\sin(\lambda(x+yt) + \sar\lambda^\alpha t^{\alpha+1} + \pi\nu/2)\,dx\,  d\lambda\\
&\qquad =\;\frac{\Gamma(1-\nu)\Gamma(\beta)}{\pi}\int_0^{+\infty} \lambda^{\nu-\beta-1} e^{-\car\lbd^\alpha t^{\alpha +1}} \sin(\lambda yt + \sar\lambda^\alpha t^{\alpha+1} + \pi(\nu-\beta)/2)\,  d\lambda\\
&\qquad =\;\frac{\Gamma(1-\nu)\Gamma(\beta)}{\Gamma(1-\nu+\beta)}\; \E_{(0,y)}\left[X_t^{-\nu+\beta}\Un_{\{X_t>0\}}\right]
\end{align*}
where the switching of the first equality is justified exactly as in Lemma 1 of \cite{PS}, and the second equality follows from trigonometry and generalized Fresnel integrals - see (2.1) and (2.2) in \cite{PS}. Assume first that $y<0$. From Proposition 2 (ii) in \cite{PS}, we obtain
\begin{multline*}\int_0^{+\infty} \int_{-\infty}^0 |x|^{\beta-1} \E_{(x,y)}[X_t^{-\nu}\Un_{\{X_t>0\}}] \,dx\,dt \\
=\, \frac{\Gamma(1-\nu)\Gamma(\beta)\Gamma(1-\nu+\beta)}{\pi} \,(\alpha+1)^{1-\nu+\beta} \,\Gamma(1-s-\beta(1+\alpha))\,  \sin(\pi \rho \alpha \beta + \pi\gamma s)\, |y|^{s+\beta(1+\alpha)-1}
\end{multline*}
for every $\beta\in (0, (1-s)/(\a+1)).$ 
Putting this together with (\ref{eq:MelLap}), we finally deduce
\begin{multline}\label{eq:Melx2} \int_{-\infty}^0 |x|^{\beta-1} \E_{(x,y)}\left[L_{T_0}^{s-1}\right]   dx \\
= \;(\alpha+1)^{\beta}\, \frac{\Gamma(\beta)\sin(\pi \rho \alpha \beta + \pi\gamma s)\Gamma(1-\nu+\beta)\Gamma(1-s-\beta(1+\alpha))}{\Gamma(1-\nu)\Gamma(1-s)\sin(\pi s(1-\gamma))}\, |y|^{s+\beta(1+\alpha)-1}.
\end{multline}
We shall now invert this Mellin transform in the variable $\beta$ in order to get a  suitable integral expression for $\E_{(x,y)}[L_{T_0}^{s-1}].$ Fix $\beta\in (0, (1-s)/(\a+1)).$  On the one hand, since $\rho\alpha <1$, we have 
$$\Gamma(\beta)\cos((\pi \rho \alpha \beta + \pi\gamma s)/2)\;= \; \int_0^{\infty} x^{\beta-1}\,  e^{-x \cos(\pi\rho \alpha/2)} \cos(x\sin(\pi\rho \alpha/2)  +\pi \gamma s/2)\,dx$$
and
\begin{align*}
\Gamma(1-\nu+\beta)\sin((\pi \rho \alpha \beta + \pi\gamma s)/2)& = \Gamma(1-\nu+\beta) \sin\left(\frac{\pi \rho \alpha}{2} (\beta + 1-\nu)\right)\\
&=  \int_0^{\infty} x^{\beta-1}\,  x^{1-\nu}e^{-x \cos(\pi\rho \alpha/2)} \sin(x\sin(\pi\rho \alpha/2))\,dx.
\end{align*}
On the other hand, a change of variable in the definition of the Gamma function shows that
$$\Gamma(1-s-\beta(1+\alpha)) =\frac{1}{1+\alpha} \int_0^{+\infty} x^{\beta-1} x^{\frac{s-1}{1+\alpha}} e^{-x^{-1/(1+\alpha)}} dx.$$
Setting
$$K_s(\xi) = \int_0^{+\infty} z^{\frac{s}{1+\alpha}-1}  e^{-\cos(\pi\rho \alpha/2)(\xi/z+z) } \cos\left(\frac{\xi}{z}\sin(\pi\rho \alpha/2)  +\pi \gamma s/2\right)  \sin(z\sin(\pi\rho \alpha/2))\,dz,$$
\noindent
we can now invert (\ref{eq:Melx2}) and obtain, applying Fubini's theorem and using the notation ${\widetilde x} = x (1+\a)\vert y\vert^{1+\a},$ a new expression for the Mellin transform of $L_{T_0}$ :
$$\E_{(\widetilde{x}, y)}\left[L_{T_0}^{s-1}\right] = \frac{2 |y|^{s-1} |x|^{\frac{s-1}{1+\alpha}}(1-s)}{\Gamma(s/(1+\alpha)) \Gamma(2-s)\sin(\pi s (1-\gamma))}  \int_0^{+\infty}\xi^{\frac{1-s}{1+\alpha}-1} e^{-\left(\frac{\xi}{|x|}\right)^{1/(1+\alpha)}} K_s\left(\xi\right) \,d\xi.$$
Since $\frac{1}{1-\gamma} = 1+\frac{\alpha \rho}{\alpha+(1-\alpha \rho)}<1+\rho<2$, it remains to prove that the function 
$$H_s(x) = (1-s)\int_0^{\infty}\xi^{\frac{1-s}{1+\alpha}-1} e^{-\left(\frac{\xi}{|x|}\right)^{1/(1+\alpha)}} K_s\left(\xi\right) \,d\xi$$
admits an analytic continuation on $[1/(\g-1), 1/(1-\g)]$. Observe first that for any $s>-1-\alpha$, the function $K_s$  is uniformly bounded on $[0,+\infty)$ by 
$$|K_s(\xi)| \leq \sin(\pi\rho \alpha/2) \int_0^{+\infty} z^{\frac{s}{1+\alpha}}  e^{-\cos(\pi\rho \alpha/2)z}\,dz = \frac{\sin(\pi\rho \alpha/2)}{\left(\cos(\pi\rho \alpha/2)\right)^{\frac{s}{1+\alpha}+1}}\Gamma\left(\frac{s}{1+\alpha}+1\right).$$
As a consequence, the function $H_s$ has an analytic continuation on $(-1-\alpha, 1)\supset [1/(\gamma-1),1)$ since $\frac{1}{\g-1}=-\frac{1+\alpha}{1+\alpha(1-\rho)}>-1-\alpha$. Next for every $s\in (0,1)$ an integration by parts shows that
\begin{align*}
H_s(x)&=(1+\alpha)\int_0^{\infty}  \xi^{\frac{1-s}{1+\alpha}} \, \frac{d}{d\xi}  \left(e^{-\left(\frac{\xi}{|x|}\right)^{1/(1+\alpha)}} K_s\left(\xi\right)\right) \,d\xi\\
&=(1+\alpha)\int_0^{\infty}  \xi^{\frac{1-s}{1+\alpha}}e^{-\left(\frac{\xi}{|x|}\right)^{\frac{1}{1+\alpha}}} K_s^\prime\left(\xi\right) \,d\xi - \frac{1+\alpha}{|x|^{1/(1+\alpha)}}\int_0^{+\infty}  \xi^{\frac{1-s-\alpha}{1+\alpha}} e^{-\left(\frac{\xi}{|x|}\right)^{\frac{1}{1+\alpha}}} K_s\left(\xi\right) \,d\xi,
\end{align*}
where $K_s^\prime$ is well-defined on $[0,+\infty)$ for any $s> 0$, and bounded by 
$$
|K_s^\prime(\xi)| \leq  2\sin(\pi\rho \alpha/2) \int_0^{\infty}z^{\frac{s}{1+\alpha}-1}  e^{-\cos(\pi\rho \alpha/2)z } dz\;=\;\frac{2\sin(\pi\rho \alpha/2)}{\left(\cos(\pi\rho \alpha/2)\right)^{s/(1+\alpha)}} \Gamma\left(\frac{s}{1+\alpha}\right).
$$
Consequently, the function $H_s$ also admits an analytic continuation on $(0,2)\supset (0, 1/(1-\gamma)]$. This completes the proof in the case $y <0.$ The case $y>0$ may be dealt with in an entirely similar way, and we leave the details to the reader.

\end{proof}

Our second preliminary result is elementary, but we could not find any reference in the literature and we hence provide a proof.

\begin{lemma}\label{lem:ln}
Let $\mu \geq \nu >0$ and $n,p \in \N$. Assume that $X$ and $Y$ are two independent positive random variables such that :
$$\Pb[X\geq z] \mathop{\asymp}\limits_{z\rightarrow +\infty} z^{-\nu}(\log z)^n\qquad \text{ and }\qquad \Pb[Y\geq z] \mathop{\asymp}\limits_{z\rightarrow +\infty}  z^{-\mu}(\log z)^p.$$
Then
$$\left\{\begin{array}{ll}
 \Pb[XY\geq z] \mathop{\asymp}\limits_{z\rightarrow +\infty}  z^{-\nu} (\log z)^{n+p+1}& \mbox{if $\mu=\nu,$}\\
\vspace{-.2cm}\\
\Pb[XY\geq z] \mathop{\asymp}\limits_{z\rightarrow +\infty}  z^{-\nu} (\log z)^{n} & \mbox{if $\mu>\nu.$}
\end{array}\right.$$
\end{lemma}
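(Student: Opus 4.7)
The plan is to write the tail probability as an integral and reduce it to a one-dimensional Laplace-type estimate. By independence,
\[
\Pb[XY \geq z] \;=\; \int_0^{+\infty} \Pb[Y \geq z/x]\, dF_X(x),
\]
where $F_X$ denotes the law of $X$. Fix $M > 0$ large. I would split this integral into three pieces: $I_1$ over $(0, M]$, $I_2$ over $(M, z/M]$, and $I_3$ over $(z/M, +\infty)$. For $z$ large, the boundary pieces are easy: $I_1 \leq \Pb[Y \geq z/M]$ is $O(z^{-\mu}(\log z)^p)$, which is negligible in both cases, and $I_3$ is trapped between $\Pb[Y \geq M]\,\Pb[X \geq z/M]$ and $\Pb[X \geq z/M]$, hence is $\asymp z^{-\nu}(\log z)^n$.

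The heart of the argument is the evaluation of $I_2$. On this range, $z/x \geq M$, so both the upper and lower asymptotics of $\Pb[Y \geq z/x]$ apply. Inserting them, then performing an integration by parts to exchange $dF_X(x)$ for $\Pb[X \geq x]\, dx$, and using the asymptotic for $\Pb[X \geq x]$, I obtain, up to positive multiplicative constants,
\[
I_2 \;\asymp\; z^{-\mu}\int_M^{z/M} x^{\mu-\nu-1}(\log x)^n\, (\log(z/x))^p\, dx \;+\; O\bigl(z^{-\nu}(\log z)^n\bigr),
\]
the error term absorbing the boundary contributions. The substitution $x = z^u$ converts the main integral into
\[
z^{-\nu}(\log z)^{n+p+1}\int_{(\log M)/\log z}^{1-(\log M)/\log z} z^{(u-1)(\mu-\nu)}\, u^n (1-u)^p\, du.
\]

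In the balanced case $\mu = \nu$ the factor $z^{(u-1)(\mu-\nu)}$ is identically $1$ and the integral converges to the positive constant $B(n+1, p+1)$, giving $I_2 \asymp z^{-\nu}(\log z)^{n+p+1}$, which dominates both boundary contributions and yields the first asymptotics. In the unbalanced case $\mu > \nu$, the integrand concentrates sharply at $u = 1$; the rescaling $u = 1 - v/\log z$ followed by Watson's lemma shows that the integral is $\asymp (\log z)^{-p-1}$, so that $I_2 \asymp z^{-\nu}(\log z)^n$, matching $I_3$. Matching lower bounds come from the same argument using the lower estimates in the $\asymp$ hypotheses, restricted to the same interior range.

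The main obstacle lies in the balanced case, where one must keep track of the precise logarithmic factor: both $\log x$ and $\log(z/x)$ contribute on a range of $x$ of logarithmic length of order $\log z$, producing the extra factor $\log z$ in the exponent and the Beta function as the leading constant.
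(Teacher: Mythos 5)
Your proposal is correct and follows essentially the same strategy as the paper's proof: decompose the product tail as an integral over one factor, isolate the bulk region where both marginal tail asymptotics apply, integrate by parts to trade the measure for the complementary distribution function, and extract the logarithmic power from the resulting one-dimensional integral. Your treatment is slightly more refined — the substitution $x=z^{u}$ yields the Beta constant $B(n+1,p+1)$ in the balanced case and reduces the unbalanced case to Watson's lemma, whereas the paper settles for order-of-magnitude bounds using $\log(z/y)\geq\tfrac12\log z$ on $[A,\sqrt z]$ for the lower bound and an $\varepsilon$-dependent split for the upper bound — but the underlying mechanism is the same.
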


\begin{proof}
We first decompose the product as 
$$\Pb[XY\geq z]\; =\; \int_0^{\infty}  \Pb [X\geq zy^{-1}] \,\Pb[Y\in dy].$$
Therefore, for $z>A$ large enough, 
\begin{align*}
\Pb[XY\geq z]  &\geq \;\int_A^{\sqrt{ z}} \Pb [X\geq zy^{-1}] \,\Pb[Y\in dy]\\
&\geq\; \frac{\kappa_1 }{z^\nu}  \int_A^{\sqrt{z}}  y^\nu(\log(zy^{-1}))^n  \,\Pb[Y\in dy]\;\geq \; \frac{\kappa_1 (\log(z))^n}{2^n z^\nu}  \int_A^{\sqrt{z}}  y^\nu\,  \Pb[Y\in dy].
\end{align*}
Then, integrating by parts,
$$\int_A^{\sqrt{z}}  y^\nu\, \Pb[Y\in dy]\;=\; A^\nu\Pb[Y\geq A]  -z^{\nu/2}  \Pb[Y\geq \sqrt{z}]\; +\; \nu \int_A^{ \sqrt{z}} y^{\nu-1}\, \Pb[Y\geq y] dy.$$
Now, if $\nu<\mu$, this expression remains bounded as $z\rightarrow +\infty$. Assume therefore that $\mu=\nu$. In this case, we have :
$$\int_A^{\sqrt{z}}  y^\nu \,\Pb[Y\in dy]\geq A^\nu\Pb[Y\geq A]\, -\, \frac{\kappa_2}{2^p} (\log z)^p \,+\, \nu \kappa_1 \int_A^{ \sqrt{z}} \frac{(\log y)^p}{y} dy \;\equi_{z\rightarrow +\infty}\; \frac{\nu\kappa_1\,(\log z)^{p+1}}{2^{p+1}(p+1)}, $$
which gives the lower bound. To obtain the upper bound, we separate the integral in three parts and proceed similarly, with $\varepsilon$ small enough:
\begin{multline*}
\int_0^{1/\varepsilon}  \Pb[X\geq zy^{-1}] \Pb[Y\in dy]\, +\, \int_{1/\varepsilon}^{\varepsilon z}  \Pb[X\geq zy^{-1}]  \Pb[Y\in dy]\, +\,\int_{ \varepsilon z}^{\infty}  \Pb[X\geq zy^{-1}]\Pb[Y\in dy]  \\
\leq\; \Pb[X\geq \varepsilon z] + \frac{\kappa_2 (\log(\varepsilon z))^{n}}{z^\nu}\int_{1/\varepsilon}^{ \varepsilon z}y^{\nu}  \Pb[Y\in dy]\,+\, \Pb[Y\geq \varepsilon z]
\end{multline*}
and the proof is concluded as before, using an integration by parts and looking separately at both cases $\nu<\mu$ and $\nu=\mu$.

\end{proof}

\subsection{Proof of Theorem A} 
By symmetry, it is enough to show Theorem A for $(x,y)\in\P_-.$ Consider the sequence
$$\lpa \Ton, \Lton\rpa_{n\ge 1}$$
and set $\{\F_n, n\ge 1\}$ for its natural completed filtration. It is easy to see from the strong Markov and scaling properties of $Z$ that this sequence is Markovian. To be more precise, starting from $\P_-$ and taking into account the possible asymmetry of the process $L$, we have the following identities for all $p\ge 1.$
$$\lpa T_0^{(2p)}, \vert L_{T_0^{(2p)}}\vert\rpa\; \elaw\;  \lpa T_0^{(2p-1)}+ \vert L_{T_0^{(2p-1)}}\vert^\a \tau^+, \vert L_{T_0^{(2p-1)}}\vert\ell^+\rpa$$
with $(\tau^+, \ell^+)\perp \F_{2p-1}$ distributed as $(T_0, \vert L_{T_0}\vert)$ under $\Pb_{(0,1)},$ and
$$\lpa T_0^{(2p+1)}, \vert L_{T_0^{(2p+1)}}\vert\rpa\; \elaw\;  \lpa T_0^{(2p)}+ \vert L_{T_0^{(2p)}}\vert^\a \tau^-, \vert L_{T_0^{(2p)}}\vert\ell^-\rpa$$
with $(\tau^-, \ell^-)\perp \F_{2p}$ distributed as $(T_0, \vert L_{T_0}\vert)$ under $\Pb_{(0,-1)}.$ The starting term $(T_0, \vert L_{T_0}\vert)$ has the same law as $(\tau^-, \ell^-)$ if $x =0$ and $y=-1.$ By induction we deduce the identities
$$\vert L_{T_0^{(2p)}}\vert\; \elaw\;  \vert L_{T_0}\vert\; \times\;  \prod_{k=1}^{p-1} \ell_k^-\;\times\;\prod_{k=1}^p \ell_k^+\qquad\mbox{and}\qquad\vert L_{T_0^{(2p+1)}}\vert\; \elaw\; \vert L_{T_0}\vert\; \times\;  \prod_{k=1}^p \ell_k^-\;\times\;\prod_{k=1}^p \ell_k^+$$
where, here and throughout, $(\tau^\pm_k, \ell^\pm_k)_{k\ge 1}$ are two i.i.d. sequences distributed as $(\tau^\pm, \ell^\pm),$ and all products are assumed independent. From Theorem B (i) in \cite{PS} and its symmetric version, the Mellin tranforms of $\ell^\pm$ are given by  
\begin{equation}
\label{MelLT0-}
\E\left[(\ell^-)^{s-1}\right]\; = \; \frac{\sin(\pi \gamma s)}{\sin(\pi (1-\gamma) s)}\qquad \mbox{and}\qquad \E\left[(\ell^+)^{s-1}\right]\; =\;  \frac{\sin(\pi \overline{\gamma} s)}{\sin(\pi (1-\overline{\gamma}) s)}
\end{equation}
for each real $s$ in the respective domain of definition, which is in both cases an open interval containing 1. This entails that $\E[\vert \log(\ell^\pm)\vert] < +\infty,$ with 
\begin{equation}
\label{EspLT0-}
\E\left[\log (\ell^-)\right]\; = \; \pi\cot(\pi\gamma)\, > \, 0 \qquad \mbox{and}\qquad \E\left[\log (\ell^+)\right]\; = \; \pi\cot(\pi\oga)\, > \, 0.
\end{equation}
The following lemma is intuitively obvious. 
\begin{lemma}
\label{infinity}
Assume $(x,y)\in\P_-$. Then one has $\Ton\to +\infty$ and the 
process $Z$ never hits the origin, a.s. under $\pbxy.$
\end{lemma}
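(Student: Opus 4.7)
The plan is to combine the multiplicative representations of $\Lton$ displayed just above the statement with the strong law of large numbers, and then to exploit the c\`adl\`ag property of $L$ to rule out a finite accumulation of the half-winding times.

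First, by \eqref{MelLT0-} the variables $\log \ell^\pm$ are integrable, with respective means $\pi \cot(\pi \gamma)$ and $\pi \cot(\pi \oga)$, both strictly positive since $\gamma, \oga \in (0, 1/2)$. Writing
\[
\log \Lton \;\elaw\; \log |L_{T_0}| \,+\, \sum_{k=1}^{\lfloor n/2 \rfloor} \log \ell_k^+ \,+\, \sum_{k=1}^{\lceil n/2 \rceil - 1} \log \ell_k^-
\]
(with the obvious adjustment according to the parity of $n$) and applying the strong law of large numbers to the two i.i.d. sequences $(\log \ell_k^\pm)$, one obtains $\log \Lton / n \to \tfrac{\pi}{2}(\cot(\pi\gamma) + \cot(\pi\oga)) > 0$ a.s., hence $\Lton \to +\infty$ a.s.

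Second, the sequence $(\Ton)_{n \ge 1}$ is nondecreasing and so admits an almost sure limit $T_\infty \in (0, +\infty]$. If $T_\infty < +\infty$ on some event of positive probability, then on that event the c\`adl\`ag trajectory of $L$ would be bounded on the compact interval $[0, T_\infty]$, contradicting $\Lton \to +\infty$ a.s. This forces $T_\infty = +\infty$ a.s.

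Third, since $X$ is continuous and $(x,y) \in \P_-$ forces $Z_0 \neq (0,0)$, one has the inclusion $\{Z_t = (0,0) \text{ for some } t \ge 0\} \subset \bigcup_{n \ge 1} \{L_{\Ton} = 0\}$. By the strong Markov property applied at $T_0^{(n-1)}$ together with Lemma~\ref{lem:asympLT0} (and its symmetric counterpart for starting points in $\P_+$), the conditional law of $L_{\Ton}$ given $\F_{n-1}$ is, on the event $\{L_{T_0^{(n-1)}} \neq 0\}$, absolutely continuous. A countable union then gives $\pbxy[L_{\Ton} = 0 \text{ for some } n] = 0$. The only step requiring mild care is the inductive verification that $L_{T_0^{(n-1)}} \neq 0$ a.s. so that the density statement of Lemma~\ref{lem:asympLT0} legitimately applies at each stage; this is the main (and very minor) obstacle, and is handled along with the countable union argument itself.
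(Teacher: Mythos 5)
Your proof is correct, and the first half takes a genuinely different route from the paper's. For the divergence of $\Ton$, the paper works with the increments $S_n=\Ton-T_0^{(n-1)}$ and proves the quantitative bound $\liminf_p \frac{1}{2p}\log S_{2p}\ge\kar>0$ by combining Cram\'er's theorem for the sums $\sum\log\ell_k^{\pm}$ with separate exponential estimates for the events $\{\vert L_{T_0}\vert \text{ small}\}$ and $\{\tau^+ \text{ small}\}$, and then Borel--Cantelli; this extra precision is not gratuitous, since the resulting estimate (2.4) is reused verbatim as the lower bound in the proof of Theorem A. You instead apply the strong law of large numbers to $\log\Lton$ and deduce $\Ton\to+\infty$ from the local boundedness of c\`adl\`ag paths on $[0,T_\infty]$ --- a shorter and more elementary argument that avoids large deviations altogether, at the price of giving no rate. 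Two small points of care: (i) the SLLN must be applied through the Markov structure described before the lemma, i.e.\ the ratios $\Lton/\vert L_{T_0^{(n-1)}}\vert$ are, conditionally on $\F_{n-1}$, independent copies of $\ell^{\pm}$, so that the multiplicative identity holds jointly in $n$ and not merely one $n$ at a time --- this is exactly what the paper's construction provides; (ii) the inclusion $\{Z \text{ hits } (0,0)\}\subset\bigcup_n\{L_{\Ton}=0\}$ uses not only the continuity of $X$ but also the already-established fact that $\Ton\to+\infty$, which guarantees that the zero set of $X$ is exhausted by the $\Ton$'s (the paper is equally terse on this). The second half of your argument is essentially the paper's, except that the paper first disposes of $\a\le 1$ by noting that $L$ itself never hits zero, whereas your absolute-continuity argument covers all $\a$ uniformly; both are valid.
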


\proof To prove the first statement, it is enough to show that $S_n = \Ton - T_0^{(n-1)}\to +\infty$ a.s. as $n\to\infty.$ Set
$$\kar\; =\; \frac{\pi\a}{2}( \cot(\pi\g)\, +\, \cot(\pi\oga))\; =\; \frac{\pi\a\sin(\pi(\g+\oga))}{2\sin(\pi\g)\sin(\pi\oga)}\; >\; 0.$$
From the above discussion, we have
\begin{equation}
\label{S2p}
S_{2p}\; \elaw\;  \vert L_{T_0}\vert^\a\; \times\; \tau^+\, \times\,\lpa  \prod_{k=1}^{p-1} \;\ell_k^-\,\times\,\ell_k^+\rpa^\a
\end{equation}
for every $p\ge 2,$ with independent products on the right-hand side. For every $\eps\in (0, \kar),$ this entails
\begin{eqnarray*}
\pbxy\lcr S_{2p} \le e^{2(p-1)(\kar -\eps)}\rcr & \le & \pbxy \lcr \vert L_{T_0}\vert\le e^{-\eps(p-1)/2\a}\rcr \; +\; \Pb\lcr \tau^+ \le e^{-\eps(p-1)/2}\rcr \\
& & \qquad\qquad + \;\, \Pb\lcr\frac{1}{p-1}\; \sum_{k=1}^{p-1}\;\log(\ell_k^-) \le \pi\cot(\pi\g) -\eps/2\rcr\\
& &  \qquad\qquad\qquad+\;\, \Pb\lcr\frac{1}{p-1}\; \sum_{k=1}^{p-1}\;\log(\ell_k^+) \le \pi\cot(\pi\oga) -\eps/2 \rcr.
\end{eqnarray*}
From Lemma \ref{lem:asympLT0}, there exists $\theta_1(\eps) > 0$ such that $\pbxy \lcr \vert L_{T_0}\vert\le e^{-\eps(p-1)/2\a}\rcr < e^{-p\theta_1(\eps)}$ for $p$ large enough. On the other hand, we have
\begin{eqnarray*}
\Pb\lcr \tau^+ \le e^{-\eps(p-1)/2}\rcr & \le & \Pb_{(0,1)}\lcr \inf\{L_t, \; t \le e^{-\eps(p-1)/2}\} < 0\rcr\\
& = & \Pb_{(0,0)}\lcr \sup\{{\hat L}_t, \; t \le 1\} > e^{\eps(p-1)/2\a}\rcr \; \le \; e^{-p\theta_2(\eps)}
\end{eqnarray*}
for some $\theta_2(\eps) > 0$ and all $p$ large enough, where we have set ${\hat L} = -L,$ the equality following from translation invariance and self-similarity, and the second inequality from the general estimate of Theorem 12.6.1 in \cite{ST}. Last, the existence of some $\theta_3(\eps) > 0$ such that both remaining terms can be bounded from above by $e^{-p\theta_3 (\eps)}$ for $p$ large enough is a standard consequence of (\ref{MelLT0-}), (\ref{EspLT0-}) and Cram\'er's theorem - see e.g. Theorem 1.4 in \cite{dH}, recalling that the assumption (I.5) can be replaced by (I.17)  therein. We can finally appeal to the Borel-Cantelli lemma to deduce, having let $\eps\to 0,$
\begin{equation}
\label{linf}
\liminf_{p\to \infty} \frac{1}{2p} \log (S_{2p})\; \ge \; \kar\; >\; 0\qquad\mbox{a.s.}
\end{equation}
This shows that $S_{2p} \to +\infty$ a.s. and an entirely similar argument yields $S_{2p+1} \to +\infty$ a.s. This concludes the proof of the first part of the lemma. 

The second part is easier. If $\a\le 1,$ it is well-known that $L$ never hits zero, so that $Z$ never hits the origin. If $\a > 1,$ we see from Lemma \ref{lem:asympLT0} that $L_{T_0}$ has no atom at zero under $\pbxy$ and because $\ell^\pm$ are absolutely continuous, all $L_{\Ton}$'s have no atom at zero. We finally get 
$$\pbxy\lcr\mbox{$Z$ visits the origin}\rcr\; =\; \pbxy\lcr \bigcup_{n\ge 1} \left\{ L_{\Ton} = 0\right\}\rcr\; =\; 0$$
where the first identification comes from the fact that $\Ton\to +\infty$ a.s.
\endproof

We can now finish the proof of Theorem A. Set $\theta_0 = \widehat{Z_0Z_{T_0}} \in (-\pi, 0)$ a.s. Observing as in \cite{McK} the a.s. identifications
$$\left\{ \omega(t)\ge -(n-1)\pi +\theta_0\right\}\; =\; \{ \Ton\ge t\}\quad\mbox{and}\quad \left\{ \omega(t)\le -(n-2)\pi +\theta_0\right\}\; =\; \{ T_0^{(n-1)}\le t\},$$
we see that Theorem A amounts to show that
$$\frac{1}{n} \log(\Ton)\; \cas\; \frac{\pi\a\sin(\pi (\gamma+\overline{\gamma}))}{2\sin(\pi\gamma)\sin(\pi \overline{\gamma})} \; =\; \kar \qquad \mbox{as $n\rightarrow +\infty.$}$$
Firstly, with the above notation, we have a.s. under $\pbxy$
$$\liminf_{n\to \infty}\frac{1}{n} \,\log(\Ton)\; \ge\;
\liminf_{n\to \infty} \frac{1}{n} \,\log (S_n)\; \ge \; \kar,$$
where the second inequality comes from (\ref{linf}) and its analogue for $n$ odd. To obtain the upper bound, we will proceed as in the above Lemma \ref{infinity}. Fixing $\varepsilon>0,$ we have
\begin{eqnarray*}
\pbxy\lcr \Ton \ge e^{n(\kar +\eps)}\rcr & \le &  \sum_{k=1}^{n}\; \pbxy\lcr S_k \ge n^{-1} e^{n(\kar +\eps)}\rcr \;\le\;\sum_{k=1}^{n}\; \pbxy\lcr S_k \ge e^{n(\kar +\eps/2)}\rcr
\end{eqnarray*}
for $n$ large enough, with the above notation for $S_k$ and having set $S_1 = T_0.$ Recalling (\ref{S2p}) we have for every $k = 2p\le n$
\begin{eqnarray*}
\pbxy\lcr S_{2p} \ge e^{n(\kar +\eps/2)}\rcr & \le & \pbxy \lcr \vert L_{T_0}\vert\ge e^{n\eps/8\a}\rcr \; +\; \Pb\lcr \tau^+ \ge e^{n\eps/8}\rcr \\
& & \qquad\qquad + \;\, \Pb\lcr\frac{2}{n}\; \sum_{k=1}^{p-1}\;\log(\ell_k^-) \ge \pi\cot(\pi\g) +\eps/4\rcr\\
& &  \qquad\qquad\qquad+\;\, \Pb\lcr\frac{2}{n}\; \sum_{k=1}^{p-1}\;\log(\ell_k^+) \ge \pi\cot(\pi\oga) +\eps/4 \rcr.
\end{eqnarray*}
Again from Lemma \ref{lem:asympLT0}, there exists $\theta_4(\eps) > 0$ such that $\pbxy \lcr \vert L_{T_0}\vert\ge e^{n\eps/8\a}\rcr < e^{-n\theta_4(\eps)}$ for $n$ large enough, whereas
\begin{eqnarray*}
\Pb\lcr \tau^+ \ge e^{n\eps/8}\rcr & \le &  \Pb_{(0,0)}\lcr \sup\{{\hat L}_t, \; t \le 1\} < e^{-n\eps/8\a}\rcr \; \le \; e^{-n\theta_5(\eps)}
\end{eqnarray*}
for some $\theta_5(\eps) > 0$ and $n$ large enough, the second inequality following e.g. from Proposition VIII.2 in \cite{B}. To handle the third term, we separate according as $p\le \sqrt{n}$ or $p > \sqrt{n}.$ In the first case, we have the upper bound
\begin{eqnarray*}
\Pb\lcr\frac{2}{n}\; \sum_{k=1}^{p-1}\;\log(\ell_k^-) \ge \pi\cot(\pi\g) +\eps/4\rcr & \le & \sum_{k=1}^{\sqrt{n}}\; \Pb\lcr\frac{2}{{\sqrt n}}\; \log(\ell_k^-) \ge \pi\cot(\pi\g) +\eps/4\rcr \\
& \le & e^{-\theta_6(\eps)\sqrt{n}}
\end{eqnarray*}
for some $\theta_6(\eps) > 0$ and $n$ large enough, using Lemma \ref{lem:asympLT0} for the second inequality. In the second case, applying Cram\'er's theorem exactly as in Lemma \ref{infinity} gives the upper bound
$$\Pb\lcr\frac{2}{n}\; \sum_{k=1}^{p-1}\;\log(\ell_k^-) \ge \pi\cot(\pi\g) +\eps/4\rcr \; \le\; e^{-\theta_7(\eps)\sqrt{n}}$$
for some $\theta_7(\eps) > 0$ and $n$ large enough. The fourth term is estimated in the same way and we finally get the existence of some $\theta (\eps) > 0$ such that
$$\pbxy\lcr S_{2p} \ge e^{n(\kar +\eps/2)}\rcr\; \le\; e^{-\theta(\eps)\sqrt{n}}$$
for $n$ large enough. An analogous estimate is obtained for $\pbxy\lcr S_{2p+1} \ge e^{n(\kar +\eps/2)}\rcr$ and we can apply as usual the Borel-Cantelli lemma to show the required upper bound
$$\limsup_{n\to \infty}\frac{1}{n}\, \log(\Ton)\; \le \; \kar +\eps,$$
for all $\eps >0,$ a.s. under $\pbxy.$ 

\qed

\subsection{Proof of Theorem B} Recall the decomposition $\Ton = S_1 +\cdots +S_n$ with $S_1\elaw T_0,$
$$S_{2p}\, \elaw\,  \vert L_{T_0}\vert^\a\, \times\, \tau^+\, \times\,\lpa  \prod_{k=1}^{p-1} \,\ell_k^-\,\times\,\ell_k^+\rpa^\a, \quad S_{2p+1}\, \elaw\,  \vert L_{T_0}\vert^\a\; \times\; \tau^-\, \times\,\lpa\prod_{k=1}^{p-1} \;\ell_k^-\,\times\,\prod_{k=1}^{p} \ell_k^+\rpa^\a,$$
and the above notation for $(\tau^\pm, \ell^\pm).$ Let us first investigate the upper tails of the distribution of each $S_k$ under $\pbxy$. We know that
$$\pbxy[T_0 > t]\; \asymp\; \pbxy[\vert L_{T_0}\vert^\a > t]\; \asymp\; \Pb[\tau^- > t]\; \asymp\; \Pb[(\ell^-)^\a > t]\; \asymp\; t^{-\theta}$$
and
$$\Pb[\tau^+ > t]\; \asymp\; \Pb[(\ell^+)^\a > t]\; \asymp\; t^{-\otheta}$$
as $t \to +\infty.$ Supposing first $\rho = 1/2$ viz. $\theta =\otheta,$ a successive application of Lemma \ref{lem:ln} shows that
$$\pbxy [S_k > t] \; \asymp\; t^{-\theta} (\log t)^{k-1}\qquad\mbox{as $t\to +\infty$}$$
for every $k\ge 1.$ Suppose then $\rho < 1/2$ viz. $\theta <\otheta,$ we obtain in a similar way
$$\pbxy [S_{2p} > t] \; \asymp\; t^{-\theta} (\log t)^{p-1}\quad \mbox{and} \quad \pbxy [S_{2p+1} > t] \; \asymp\; t^{-\theta} (\log t)^{p}\quad\mbox{as $t\to +\infty,$}$$
for every $p\ge 1.$ Last, if $\rho > 1/2$ we find
$$\pbxy [S_{2p} > t] \; \asymp\; t^{-\otheta} (\log t)^{p-1}\quad \mbox{and} \quad \pbxy [S_{2p+1} > t] \; \asymp\; t^{-\otheta} (\log t)^{p-1}\quad\mbox{as $t\to +\infty,$}$$
for every $p\ge 1.$ All in all, for all $k \ge 2,$ this shows that
$$\left\{\begin{array}{ll}
\Pb_{(x,y)}[S_k>t]\,\asymp \,t^{-\theta}\; (\log t)^{\left[\frac{k-1}{2}\right]} & \mbox{if $\rho < 1/2$}\\
\vspace{-.2cm}\\
\Pb_{(x,y)}[S_k>t]\,\asymp \,t^{-\otheta}\; (\log t)^{\left[\frac{k}{2}\right]-1} & \mbox{if $\rho > 1/2$}\\
\vspace{-.2cm}\\
\Pb_{(x,y)}[S_k>t]\,\asymp \,t^{-\theta}\; (\log t)^{k-1} & \mbox{if $\rho =1/2,$}
\end{array}\right.$$
and we also know that $\pbxy [S_1 > t]\asymp t^{-\theta}.$ The immediate estimate $\pbxy[\Ton > t] \ge \pbxy[S_n > t]$ yields the required lower bound. To get the upper bounds, it suffices to write 
$$\pbxy[\Ton > t]\; \le\; \sum_{k=1}^n \;\pbxy[S_k > t/n]$$
and to control the sum separately according as $\rho < 1/2, \rho > 1/2$ and $\rho = 1/2.$ We leave the details to the reader.

\qed

\bigskip
 
\noindent
{\bf Acknowledgement.}  Ce travail a b\'en\'efici\'e d'une aide de la Chaire {\em March\'es en Mutation}, F\'ed\'eration Bancaire Fran\c{c}aise.

\end{document}